\newcommand{\jonly}[1]{}
\newcommand{\aronly}[1]{#1}
\def\rk{\mathop{\fam0 rk}}
\def\t{\widetilde}
\def\R{{\mathbb R}} \def\Z{{\mathbb Z}}
\theoremstyle{theorem}
\newtheorem{theorem}{Theorem}[section]
    \newtheorem{lemma}[theorem]{Lemma}
    \newtheorem{proposition}[theorem]{Proposition}
\theoremstyle{definition}
\newtheorem{remark}[theorem]{Remark}
\begin{document}

\title{A short proof 
of the Patak-Tancer theorem on non-embeddability of $k$-complexes in $2k$-manifolds\footnote{We are grateful for useful discussions to S. Dzhenzher, R. Karasev, M. Tancer and S. Zhilina.}}
 
\author{E. Kogan and A. Skopenkov
\footnote{Supported by Russian Science Foundation, grant N 25-21-00685. 
\newline
E. Kogan: Higher School of Economics.
\newline
A. Skopenkov: Moscow Institute of Physics and Technology, Independent University of Moscow.
Email: \texttt{skopenko@mccme.ru}.
\texttt{https://users.mccme.ru/skopenko/}.}}

\date{}

\maketitle

\begin{abstract}
In 2019 P. Patak and M. Tancer obtained the following higher-dimensional generalization of the Heawood inequality on embeddings of graphs into  surfaces.
We present a short well-structured proof accessible to non-specialists in the field.

Let $\Delta_n^k$ be the union of $k$-dimensional faces of the $n$-dimensional simplex.

{\bf Theorem.} {\it
(a) If $\Delta_n^k$ PL embeds into the connected sum of $g$ copies of the Cartesian product $S^k\times S^k$ of two $k$-dimensional spheres, then $g\ge\dfrac{n-2k-1}{k+2}$.

(b) If $\Delta_n^k$ PL embeds into a closed $(k-1)$-connected PL $2k$-manifold $M$, then
$(-1)^k(\chi(M)-2)\ge\dfrac{n-2k-1}{k+1}$.}
\end{abstract}


\aronly{\tableofcontents}

\section{Introduction}

The classical Heawood inequality states that {\it if the complete graph $K_n$ on $n$ vertices is embeddable (i.e. realizable without self-intersections) in the sphere with $g$ handles, then}
$$g\ge\frac{(n-3)(n-4)}{12}.$$
Denote by $\Delta_n^k$ the union of $k$-dimensional faces of the $n$-dimensional simplex (this is the body of the complete $(k+1)$-regular hypergraph on $n+1$ vertices).
A higher-dimensional analogue of the Heawood inequality is the K\"uhnel conjecture whose simplified version states that 
{\it if $\Delta_n^k$ embeds into the connected sum of $g$ copies of the Cartesian product $S^k\times S^k$ of two $k$-dimensional spheres, then}
$$g\ge\frac{(n-k-2)(n-k-3)\ldots(n-2k-2)}{2(k+1)(k+2)\ldots(2k+1)}.$$
We present a short well-structured proof
(see Remark \ref{r:earlier}) of the Pat\'ak-Tancer \cite{PT19} estimate
$$g\ge\frac{n-2k-1}{k+2}.$$
For this estimate one needs the following definitions and a more general result.

The {\it Euler characteristics} of a PL $2k$-manifold $M$ is $\chi(M)=a_0-a_1+a_2-\ldots+a_{2k}$, where $a_j$ is the number of $j$-dimensional faces in some (or, equivalently, in any) triangulation (or cell decomposition) of $M$.
A PL manifold $M$ is called {\it $(k-1)$-connected} if for any $j=0,1,\ldots,k-1$ any continuous map $\Delta_{j+1}^j\to M$ extends to a continuous map $\Delta_{j+1}^{j+1}\to M$.

\begin{theorem}[{\cite[Theorem 1]{PT19}}]\label{t:patan}
If $\Delta_n^k$ PL embeds into a closed $(k-1)$-connected PL $2k$-manifold $M$, then
$(-1)^k(\chi(M)-2)\ge \dfrac{n-2k-1}{k+1}$.

If, moreover, the intersection form of $M$ is even, then
\linebreak
$(-1)^k(\chi(M)-2)\ge \dfrac{2(n-2k-1)}{k+2}$.
\end{theorem}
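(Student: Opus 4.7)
The plan is to convert the topological inequality into a rank bound on the middle intersection form of $M$ and then use the embedding $f:\Delta_n^k\hookrightarrow M$ to produce linearly independent classes in $H_k(M;\Z_2)$. First I reduce to the middle Betti number: since $M$ is a closed $(k-1)$-connected PL $2k$-manifold, Poincar\'e duality with $\Z_2$ coefficients gives $H_i(M;\Z_2)=0$ for $0<i<2k$ with $i\ne k$. Writing $\beta:=\dim_{\Z_2} H_k(M;\Z_2)$, one has $\chi(M)=2+(-1)^k\beta$, so the two target inequalities become $(k+1)\beta\ge n-2k$ and, in the even case, $(k+2)\beta\ge 2(n-2k)$.

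Next I would use the embedding to produce classes. For every $(k+1)$-subset $A$ of the vertex set $\{0,\dots,n\}$, the topological boundary $\partial A$ of the $(k+1)$-simplex spanned by $A$ is a $k$-sphere lying inside $\Delta_n^k$; let $\alpha_A\in H_k(M;\Z_2)$ denote its image class under $f_*$. Each $(k+2)$-subset $B$ yields a linear relation $\sum_{A\subset B,\,|A|=k+1}\alpha_A=0$ coming from $\partial^2 B=0$. The $\Z_2$-intersection form $Q$ on $H_k(M;\Z_2)$ then gives pairings $Q(\alpha_A,\alpha_{A'})$ controlled by $|A\cap A'|$: a transversality/perturbation argument shows pairs with $|A\cap A'|\le k$ contribute trivially, while pairs sharing a common $k$-face (the case $|A\cap A'|=k+1$) behave like two $k$-spheres meeting in a single top disk and contribute $1$.

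I would then select an explicit family of $\lceil(n-2k)/(k+1)\rceil$ subsets whose classes have a non-degenerate intersection matrix (roughly one new class per group of $k+1$ vertices past an initial base of $2k+1$), forcing $\beta\ge\lceil(n-2k)/(k+1)\rceil$. For the even case, every self-intersection $Q(\alpha,\alpha)$ vanishes, so the classes $\alpha_A$ span a totally isotropic subspace of $(H_k(M;\Z_2),Q)$; since a maximal isotropic subspace of a non-degenerate alternating form has dimension $\beta/2$, the independent family of $\alpha_A$'s can occupy at most half of $H_k(M;\Z_2)$, doubling the required Betti number and giving $\beta\ge 2(n-2k)/(k+2)$.

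The main obstacle is the intersection computation: the pairings $Q(\alpha_A,\alpha_{A'})$ are not determined by the combinatorics of $\Delta_n^k$ alone and in principle depend on how $f$ places the simplex in $M$. The technical heart of the Pat\'ak--Tancer argument should be to isolate an embedding-independent mod-$2$ invariant---presumably built from local linking data on the $(2k-1)$-spheres around each vertex, or from a van Kampen-type obstruction---that serves as a combinatorial proxy for $Q$ and produces the stated lower bound uniformly over all PL embeddings.
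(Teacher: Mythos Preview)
Your reduction to the middle Betti number $\beta=\dim_{\Z_2}H_k(M;\Z_2)$ is correct, as is the idea of forming the Gram matrix of the classes $\alpha_P:=[f\partial P]$ for $(k+2)$-element subsets $P$, and the observation that $\alpha_P\cdot\alpha_Q=0$ when $P\cap Q=\emptyset$. But the next step is wrong: the individual pairings $\alpha_P\cdot\alpha_Q$ are \emph{not} determined by $|P\cap Q|$. In particular your claim that two boundary spheres sharing a common $k$-face always intersect in $1$ mod $2$ is false; a small isotopy can push $f\partial P$ off $f\partial Q$ entirely when they share a face, and conversely two such spheres may have nonzero pairing in some embeddings and zero in others. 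What \emph{is} embedding-independent is a certain sum: the paper's Lemma~1.4 (a ``conical'' Conway--Gordon--Sachs/van~Kampen--Flores statement) shows that for every vertex $i$ and every $(2k+2)$-subset $F$ disjoint from $i$, the sum $\sum_{\{\sigma,\tau\}:\,\sigma\sqcup\tau=F}\alpha_{\{i\}\sqcup\sigma}\cdot\alpha_{\{i\}\sqcup\tau}$ equals $1$. This ``non-triviality'' condition, together with the triviality and the $\partial^2=0$ linear relations you noted, are the only constraints fed into a purely linear-algebraic rank estimate (Proposition~1.3), which is itself non-trivial and does the real work of producing $\rk A\ge(n-2k)/(k+1)$ and, for even $A$, $\rk A\ge 2(n-2k)/(k+2)$. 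Your plan to exhibit an explicit independent family with a prescribed non-degenerate Gram matrix cannot succeed because that Gram matrix is not fixed by the combinatorics.

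Your even-case argument has a separate error: from $Q(\alpha,\alpha)=0$ for all $\alpha$ in the span you only get that $Q$ is alternating on that span, not that the span is totally isotropic. If it were totally isotropic the Gram matrix would be zero and give no rank information at all. In the paper the even hypothesis enters instead through the linear-algebraic Proposition~1.3: when the Gram matrix has zero diagonal one can, after finding $P,Q$ with $A_{P,Q}=1$, project orthogonally off the hyperbolic pair $\langle P,Q\rangle$ and drop the rank by $2$ while only discarding $k+2$ vertices, which is what yields the improved constant.
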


For a definition of the \textit{homology group} $H_k(M;\Z_2)$ and the \textit{intersection form}
$\cap_M : H_k(M;\Z_2)\times H_k(M;\Z_2) \to \Z_2$  accessible to non-specialists in topology (in particular, to computer scientists) 
see \cite[\S1.1, \S5.3]{Pr07},   \cite[\S6, \S10]{Sk20}, \cite{IF, HG}.
A bilinear form $q:V\times V\to \Z_2$ on a $\Z_2$-vector space 
$V$  is called \textbf{even} if $q(v,v)$ is even for every $v\in V$. 

The above estimate $g\ge\frac{n-2k-1}{k+2}$ is a particular case of Theorem \ref{t:patan} because
the intersection form of the connected sum $M$ of $g$ copies of $S^k\times S^k$ is even, and $(-1)^k(\chi(M)-2)=2g$.

By Remark \ref{r:patan}.c our proof also recovers \cite[Theorem 2 and Corollary 3]{PT19}  (but not \cite[Theorems 4, 6 and 10]{PT19}, see Remark \ref{r:earlier}).
Our exposition was useful to obtain stronger results \cite{DS22, SS23} on (and related to) the K\"uhnel conjecture (of Remark \ref{r:patan}.a).

\begin{remark}\label{r:patan}
(a) The K\"uhnel conjecture asserts that {\it if $\Delta_n^k$ PL embeds into a closed $(k-1)$-connected PL $2k$-manifold $M$, then}
$$(-1)^k(\chi(M)-2)\ge \frac{(n-k-2)(n-k-3)\ldots(n-2k-2)}{(k+1)(k+2)\ldots(2k+1)}.$$
Giving a survey on this K\"uhnel conjecture is beyond the scope of the present paper; see e.g. the surveys \cite[\S1, \S6]{DS22}, \cite{Ku23}.  

(b) For a closed $(k-1)$-connected PL $2k$-manifold $M$ we have $(-1)^k(\chi(M)-2)=\rk H_k(M;\Z_2)$.
If in Theorem \ref{t:patan} we replace $(-1)^k(\chi(M)-2)$ by $\rk H_k(M;\Z_2)$, then the $(k-1)$-connectedness assumption could be omitted. 
(This holds by our proof, and holds for \cite[Theorem 1]{PT19}.)

(c) A map $f:\Delta_n^k\to Y$ to a subset $Y\subset\R^d$ is called an {\it almost embedding} 
if $f\sigma\cap f\tau=\emptyset$ for any non-adjacent faces  $\sigma,\tau$.
A general position PL map $f:\Delta_n^k\to M$ to a $2k$-manifold $M$ is called a {\it $\Z_2$-embedding} if
$|f\sigma\cap f\tau|$ is even for any non-adjacent faces $\sigma,\tau$ (see well-known definition of `general position' e.g. in \cite[\S1]{Sk24}).
Theorem \ref{t:patan} holds (with the same proof) if one replaces `embeds into' by `has an almost embedding to' or even by `has a $\Z_2$-embedding to'. 
This non-$\Z_2$-embeddability result is stronger than Theorem \ref{t:patan} by \cite{FK17} and \cite[Theorem 1.2.1.b]{Sk24}.
For $k=1$ this result asserts that {\it if $K_s$ is $\Z_2$-embeddable to a surface $M$, then $3\chi(M)\le2-2s$}.
This is covered (at least for large $s$) by \cite[Theorem 1]{FK19}, because $K_{n,n}$ is a subgraph of $K_{2n}$.

(d) In the following paragraph we prove that \emph{the intersection form of any \emph{smooth} $(k-1)$-connected $2k$-manifold is even when $k\ne1,2,4$}. 
The (presumably folklore) proof is written by D. Crowley and A. Skopenkov.
We conjecture that the same result holds for {\it PL} manifolds.

Assume that $k\ge3$ and $k\ne4$.
Let $M$ be given manifold.
Take any $x\in H_k(M;\Z_2)$. Since $M$ is $(k-1)$-connected and $k\ge3$, we can realize $x$ as a smooth embedding $S^k\to M$.
Let $\overline e$ be the modulo 2 Euler number of the normal bundle of this embedding.
Then $x^2=\overline e=0\in\Z_2$.
Here the first equality is an exercise and the second equality is a version of the celebrated result on non-parallelizability of spheres \cite{BM58}. 

\end{remark}


\begin{remark}\label{r:earlier}
We make the proof of Theorem \ref{t:patan} well-structured by splitting it into independent parts: 

$\bullet$ the purely topological part (Theorem \ref{l:odd}, cf. Remark \ref{r:low} and \cite[Proposition 16.C2]{PT19} referred later in \cite{PT19} as Proposition 16.(ii)), and

$\bullet$ the purely algebraic part (Theorem \ref{p:rank}, cf. \cite[Proposition 17]{PT19}). 

This is done by introducing the notion of an ${U\choose l}$-matrix. 
Our proof of Theorem \ref{t:patan} is shorter and hopefully clearer than \cite{PT19} because of this, and because 

$\bullet$ we present elementary statement and proof of Theorem \ref{p:rank} 
(without using cycles and homology as in \cite[Proposition 17 and \S4.2]{PT19}, and topological terminology as in \cite[Lemma 20]{PT19}); 

$\bullet$ we highlight the main idea of Theorem \ref{p:rank} by explicitly stating purely algebraic Lemmas \ref{l:step-alga} and \ref{l:step-alg}
(which we prove in a slightly simpler way, not just in the same way using simpler terminology); 


$\bullet$ we do not use `obstruction machinery' from \cite[\S1.2, \S3]{PT19}, as opposed to \cite[\S4.1]{PT19}
(for a simplified exposition of this machinery see \cite[\S1]{Sk24}, \cite[\S2.5]{KS21e}).

Our exposition clarifies the relation of the proof from \cite{PT19} to earlier known results (see footnote \ref{f:rela} and \aronly{Remark \ref{r:rankher}}\jonly{\cite[Remark 3.7]{KS21}}), 
and to the low-rank matrix completion problem \cite{NKS}.
\end{remark}


\begin{remark}\label{r:low} Here by a low-dimensional example we motivate the notion of a ${U\choose l}$-matrix, and illustrate the idea of 
Theorem \ref{l:odd}.
This remark is not formally used in the proof.

Consider the following statements:

(\ref{r:low}.A) {\it Any four pairwise distinct points 
on the circle can be split into two `intertwined' pairs in a unique way.}

(\ref{r:low}.B) {\it For any pairwise distinct points $A_1,A_2,A_3,A_4$ on the circle}
$$|A_1A_2\cap A_3A_4|+|A_1A_3\cap A_2A_4|+|A_1A_4\cap A_2A_3|=1.$$

(\ref{r:low}.B') {\it For any general position PL map $f:K_5\to\R^2$ the number of intersection points in $\R^2$ formed by images of disjoint edges is odd.}

(\ref{r:low}.C) {\it Take any embedding 
$f\colon K_5\to M$ to a 2-surface $M$.
Take any map $f'\colon K_n\to M$ in general position to $f$.
For any pairwise distinct numbers $i,j,k\in[n]$ take a cycle $\langle ijk\rangle$ in $K_n$.
Denote $ijk\wedge pqr := |f\langle ijk\rangle\cap f'\langle pqr\rangle|$. 
Then} 
$$123\wedge234 + 123\wedge235 + 123\wedge245 +  123\wedge345 \underset2\equiv 0 \quad and$$
$$125\wedge345 + 135\wedge245 + 145\wedge235 \underset2\equiv 1.$$

Here $(A)$ and $(A)\Rightarrow(B)\Rightarrow(C)$ are easy.
A simple proof of $(A)\Rightarrow(B')$ is presented in \cite{Sk14}
(for the linear case, for the PL case the deduction is analogous).
\end{remark}


We shorten $\{i\}$ to $i$ and `a symmetric square matrix with $\Z_2$-entries' to just `matrix'. 
For $l\ge3$ and a finite set $U$, a \textbf{${U\choose l}$-matrix} is a matrix 
whose rows and whose columns correspond to all $l$-element subsets of $U$, and for which the following properties hold:

\textbf{(independence)} $A_{P,Q}=0$ if $P\cap Q=\emptyset$;

\textbf{(linear dependence)} for each $(l+1)$-element and $l$-element subsets $F,P\subset U$
$$\sum\limits_{i\in F} A_{F-i,P}=0;$$

\textbf{(non-triviality)} for each $i\in U$ and $(2l-2)$-element subset $F\subset U-i$ we have $A_{F,i}=1$, where
$$A_{F,i} := \sum\limits_{\{X,Y\}\ :\ F\cup i=X\cup Y,\ X\cap Y=i,\ |X|=|Y|=l} A_{X,Y} =
\sum\limits_{\{\sigma,\tau\}\ :\ F=\sigma\sqcup \tau,\ |\sigma|=|\tau|=l-1} A_{i\sqcup \sigma,i\sqcup\tau}.$$
Analogously to Remark (\ref{r:low}.C) (by Theorem \ref{l:odd} below), an ${[n+1]\choose k+2}$-matrix is constructed by a $\Z_2$-embedding of $\Delta^k_n$ to a $2k$-dimensional manifold.



Let  $[m]:=\{1,2,\ldots,m\}$.   
We identify $(s+1)$-element subsets of $[n+1]$ with $s$-dimensional faces of $\Delta_n$.
For a $(k+2)$-element subset $P\subset [n+1]$ denote by $\partial P$ the boundary sphere of the $(k+1)$-dimensional face
$P$.
For a $2k$-manifold $M$, a map $f\colon \Delta_n^k\to M$, and $(k+2)$-element subsets $P,Q\subset [n+1]$
denote
$$A(f)_{P,Q} := f\partial P\cap_M f\partial Q\in\Z_2.$$
(Here $\cap_M$ the algebraic intersection modulo 2 of $k$-dimensional spheres, or of modulo 2 homology $k$-cycles, see the sentence after Theorem \ref{t:patan}.) 
The obtained square matrix $A(f)$ of size $n+1\choose k+2$ is symmetric.

\begin{theorem}[{\cite[Proposition 16]{PT19}}; proved in \S\ref{s:prodd}]\label{l:odd}
If $f:\Delta_n^k\to M$ is an embedding into a $2k$-manifold $M$, then $A(f)$ is a symmetric $[n+1]\choose k+2$-matrix.
\end{theorem}



\begin{theorem}[cf. {\cite[Proposition 17]{PT19}}; proved in \S\ref{s:prank}]\label{p:rank}
If $l\ge3$ and $A$ is an ${[m]\choose l}$-matrix, then 
$\rk A\ge\dfrac{m-2l+2}{l-1}$.

If, moreover, $A$ is even, then $\rk A\ge\dfrac{2(m-2l+2)}l$.
\end{theorem}

A 
matrix 
is called {\it even} if all the numbers on the main diagonal are even.

\begin{proof}[Proof of Theorem \ref{t:patan} assuming Theorems \ref{p:rank} and \ref{l:odd}]
For a PL embedding $f:\Delta_n^k\to M$ the matrix $A=A(f)$ is the Gramian matrix (with respect to $\cap_M$)
of the homology classes of the images $f\partial P$.
Hence $\rk H_k(M;\Z_2)\ge\rk A$ by the following well-known result\aronly{ (see a proof e.g. in \cite[Lemma 2.1]{Bi21})}.

{\it Let $v_1, v_2, \ldots, v_s$ be vectors in some $d$-dimensional linear space over $\Z_2$ with a bilinear symmetric product.
Let $A$ be the Gramian matrix of $v_1, v_2, \ldots, v_s$. 
Then $d\ge\rk A$.}

By Theorems \ref{l:odd} and \ref{p:rank} we obtain $\rk A\ge\frac{n-2k-1}{k+1}$ and,
for $A$ even, $\rk A\ge\frac{2(n-2k-1)}{k+2}$.
So we are done by Remark \ref{r:patan}.b.
\end{proof}

\section{Proof of Theorem \ref{l:odd}}\label{s:prodd}

The symmetry, the independence and the linear dependence are straightforward.
Just as (\ref{r:low}.C) is easily implied by (\ref{r:low}.A), the non-triviality is reduced to a higher-dimensional analogue of (\ref{r:low}.A), i.e. to Theorem \ref{t:stat-ilpl}.(odd) below\footnote{\label{f:rela} In \cite[\S4.1]{PT19} Theorem \ref{l:odd} was essentially reduced to Theorem \ref{t:stat-ilpl}.(even), which is a higher-dimensional analogue of (\ref{r:low}.B').
For a direct proof of the equivalence of the odd and the even cases of Theorem \ref{t:stat-ilpl} see
\cite[Remark 4.1.aeg]{Sk16}
\newline
Theorem \ref{l:odd} is a `conical version' of  Theorem \ref{t:stat-ilpl}.(odd).
The `conical version' of  the analogue of Theorem \ref{t:stat-ilpl}.(odd) for the join of $(d+1)/2$ copies of the four-point set (in place of the $(d+1)/2$-skeleton of $\Delta_{d+2}$) is \cite[Lemma 1']{Sk03}.
This analogue coincides with Theorem \ref{t:stat-ilpl}.(odd) for $d=1$ but is different for $d>1$.
This analogue is closer to (\ref{r:low}.B) than to (\ref{r:low}.C).}.
Namely, it suffices to prove the non-triviality for $n=2k+2$.
For a $(k+1)$-element subset $\sigma\subset[2k+2]$ let $\widehat\sigma:=\partial(\{2k+3\}\sqcup\sigma)$;
this is a $k$-dimensional sphere in $\Delta_{2k+2}^k$.
Then the non-triviality for $n=2k+2$ follows because
$$\sum f\widehat\sigma\cap_M f\widehat\tau \ \overset{(1)}=\
\sum |(D\cap f\widehat\sigma)\cap(D\cap f'\widehat\tau)|\mod2\ \overset{(2)}=\
\sum L_{\sigma, \tau} \ \overset{(3)}=\ 1.$$
Here

$\bullet$ the sums are over $\{\{\sigma,\tau\}\ :\ [2k+2]=\sigma\sqcup \tau,\ |\sigma|=k+1\}$;

$\bullet$  $f'\colon \Delta_{2k + 2}^k\to M$ is an embedding close to $f$ and in general position to $f$;

$\bullet$ $D \subset M$ is a small $2k$-disk in general position to $f\Delta_{2k + 2}^k$ and to
$f'\Delta_{2k + 2}^k$, containing $f(2k + 3)$ and $f'(2k + 3)$,
and such that $D \cap f\Delta_{2k + 2}^k = D \cap f'\Delta_{2k + 2}^k = \emptyset$;

$\bullet$ the equality $(1)$ holds because $f$ is an embedding and by definition of $f'$;

$\bullet$ $L_{\sigma, \tau}$ is the modulo 2 linking number of $\partial D\cap f\widehat\sigma$
and $\partial D \cap f'\widehat\tau$; by the symmetry of the linking number
$L_{\sigma, \tau} = L_{\tau, \sigma}$, hence the sum over $\{\sigma, \tau\}$ is well-defined;

$\bullet$ the equality $(2)$ holds by the following well-known statement (cf. \cite[Lemma 2]{Sk18o}):
{\it for any proper general position PL maps $g,g':D^k\to B^{2k}$ such that
$g\partial D^k\cap g'\partial D^k=\emptyset$ the number $|gD^k\cap g'D^k|$ modulo 2 equals to the modulo 2
linking number of $g\partial D^k$ and $g'\partial D^k$ in $\partial B^{2k}$.}

$\bullet$ the equality $(3)$ is Theorem \ref{t:stat-ilpl}.(odd).

\begin{theorem}
\label{t:stat-ilpl}
Let $f:\Delta_{d+2}\to\R^d$ be a general position PL map.

(odd) If $d$ is odd, then the number of unordered pairs of disjoint $(d+1)/2$-faces the images of whose boundaries are linked modulo 2, is odd.

(even) If $d$ is even, then the number of intersection points in $\R^d$ formed by images of disjoint $(d/2)$-faces is odd.
\end{theorem}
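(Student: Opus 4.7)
The plan is to prove both parts by the classical two-step strategy: first verify that the mod-$2$ count in each case is invariant under general position PL isotopy of $f$, then evaluate the count on one convenient explicit map. Since any two general position PL maps $\Delta_{d+2}\to\R^d$ can be connected by a generic PL homotopy, these two inputs together suffice.

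For invariance, I would argue as follows. A generic one-parameter family $f_t$ crosses only finitely many codimension-one singular configurations. In the (even) case, at each such singularity two transverse intersection points of some pair of disjoint $(d/2)$-faces are simultaneously born or destroyed, so the parity of the total intersection count is preserved. In the (odd) case, I would replace the linking count by the equivalent intersection count via the Seifert identity that $\lk(f\partial\sigma, f\partial\tau)$ modulo $2$ equals $|f\sigma\cap f\partial\tau|$ modulo $2$ (using $f\sigma$ itself as a filling of $f\partial\sigma$); the same singularity analysis then yields invariance of the parity of the count of linked pairs.

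For the explicit computation, I would choose an affine map placing the $d+3$ vertices of $\Delta_{d+2}$ in general position on the moment curve $t\mapsto(t,t^2,\ldots,t^d)\in\R^d$. For such an affine $f$, two disjoint faces of combined dimension $d$ have intersecting images iff their vertex partition is a Radon partition of their union, which for $d+2$ points in general position is unique. A direct combinatorial count, using Gale's evenness condition for the cyclic polytope to enumerate Radon partitions of the right shape, then shows the parity equals $1$ in both cases.

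The main obstacle is the explicit combinatorial verification in the second step, particularly in the (odd) case, where the linking-to-intersection conversion must be made precise before the Radon/Gale counting can be applied. A cleaner alternative, consonant with the paper's observation that $(A)\Rightarrow(B')$ via \cite{Sk14}, is induction on $d$: the base case $d=1$ is essentially statement (A) of Remark \ref{r:low}, and the inductive step reduces the dimension by intersecting $f$ with a small sphere $S^{d-1}\subset\R^d$ around the image of a chosen vertex, yielding a general position PL map $\Delta_{d+1}\to S^{d-1}\cong\R^{d-1}\cup\{\infty\}$ to which the lower-dimensional case applies.
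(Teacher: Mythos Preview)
The paper does not give its own proof of Theorem~\ref{t:stat-ilpl}; it is quoted as a known result, with references collected in Remark~\ref{r:stat-ilpl} (the even case is the classical van Kampen--Flores theorem, the odd case is the Conway--Gordon--Sachs theorem and its higher-dimensional analogues \cite{SS92, LS98, Ta00}; see also the footnote pointing to \cite[Theorem~1.7]{KS20}). So there is no in-paper argument to compare your attempt against.

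That said, your outline matches the standard proofs in those references. The two-step ``invariance under generic homotopy plus one explicit computation'' is exactly how the van Kampen--Flores parity is usually established, and your alternative---induction on $d$ by intersecting with a small sphere about the image of a vertex, reducing $\Delta_{d+2}\to\R^d$ to $\Delta_{d+1}\to S^{d-1}$---is precisely the mechanism the paper alludes to when it writes that $(A)\Rightarrow(B')$ via \cite{Sk14}, and it is the route taken in \cite{KS20}. So your proposal is on the right track and would succeed if carried out.

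The place where your sketch is thinnest, and where a reader would want more, is the inductive step itself. You need to set up an explicit correspondence between the pairs counted in dimension $d$ and those counted in dimension $d-1$, and check that the mod-$2$ linking number of $f\partial\sigma$ and $f\partial\tau$ in $\R^d$ (with, say, the distinguished vertex in $\sigma$) equals the mod-$2$ intersection number of the induced images of $\sigma\setminus\{v\}$ and $\tau$ in $S^{d-1}$; this is where the Seifert-type identity you mention actually does the work, and it alternates the odd and even cases at each step of the induction. In your first approach the analogous soft spot is, as you yourself flag, the Gale-evenness count on the moment curve: it can be done, but it is not a one-liner, especially in the odd case.
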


Theorem \ref{t:stat-ilpl}

$\bullet$ is (\ref{r:low}.A) for $d=1$,

$\bullet$ is the well-known Conway-Gordon-Sachs theorem for $d=3$, i.e. for graph $K_6$ in 3-space,

$\bullet$ is its higher-dimensional version \cite{SS92}\footnote{Theorem \ref{t:stat-ilpl}.(odd) is not explicitly stated in \cite{SS92}
but can be proved analogously to \cite[Lemma 1.4]{SS92}. See details in \cite[Theorem 1.7 and footnote 4]{KS20}.},
\cite{LS98, Ta00} for $d>3$ odd, and

$\bullet$ is the van Kampen-Flores theorem for $d$ even, see e.g. survey \cite[\S4]{Sk16}.

This is because any general position PL map $\Delta_{d+2}^{\lfloor\frac d2\rfloor}\to\R^d$ extends to a general position PL map  $\Delta_{d+2}\to\R^d$, and because for $d$ odd any general position PL map $\Delta_{d+2}^{\frac{d-1}2} \to \R^d$ is an embedding.

\section{Proof of Theorem \ref{p:rank}}\label{s:prank}

Theorem \ref{p:rank} is deduced by induction from Propositions \ref{p:rankest}.a,b below,
which is deduced from the following lemmas explaining the key idea of proof.

\begin{lemma}[trivial]\label{p:hered} Let $B$ be the square matrix of size ${m-1\choose l}$ obtained from an ${[m]\choose l}$-matrix by deleting rows and columns corresponding to all subsets containing $m$.
Then $B$ is an ${[m-1]\choose l}$-matrix.
\end{lemma}


\begin{lemma}\label{l:step-alga} Let $A$ be a 
matrix 
whose rows and whose columns correspond to all $l$-element subsets of $[m]$, and for which the independence property holds. 
Assume that $A_{X,X}=A_{Y,Y}=0$ and $A_{X,Y}=1$ for some $X,Y\subset[m]$. 
Let $A|_{[m]-X}$ be the `\emph{restriction}' of $A$ to $[m]-X$, i.e. the matrix of size ${m-l\choose l}$ 
obtained from $A$ by deleting rows and columns corresponding to subsets containing at least element from $X$.
Then $\rk A\ge\rk A|_{[m]-X}+2$.

\end{lemma}

\begin{proof}[Proof (suggested by S. Zhilina)] 
Let $A'$ be the `restriction' of $A$ to $X, Y$ and all $l$-element subsets of $[m]-X$. 
Then the $X$-line and the $X$-row of $A'$ consist of zeros except for $A'_{X,Y}=A'_{Y,X}=1$. 
Subtracting the $X$-line from some lines corresponding to subsets of $[m]-X$,  
we obtain a matrix $A''$  whose $Y$-row consist of zeros except for $A''_{X,Y}=1$. 
Then subtracting the $X$-row from some rows corresponding to subsets of $[m]-X$,  
we obtain a matrix $A'''$  whose $Y$-line also consist of zeros except for $A'''_{Y,X}=1$. 
The `restrictions' of $A'''$ and of $A$ to all $l$-element subsets of $[m]-X$ coincide.  
Since $A_{X,Y}=1$, by the independence $X\cap Y\ne\emptyset$, so $A|_{[m]-X}$ does not contain the $Y$-line (and the $Y$-row). 
Then $\rk A \ge \rk A' = \rk A''' = \rk A|_{[m]-X} + 2$. 
\end{proof}

\begin{lemma}[proved below; cf. \aronly{Remark \ref{r:step-algb}}\jonly{\cite[Remark 3.6]{KS21}}]\label{l:step-alg} 
Let $A$ be an ${[m]\choose l}$-matrix. 
Assume that $A_{X,X}=1$ for some $X\subset[m]$. 
Let $U$ be the union of $[m]-X$ and any element from $X$. 
For $l$-element subsets $P,Q\subset U$ define 
$$C_{P,Q}:=A_{P,Q}+A_{P,X}A_{Q,X}.$$
Then $\rk C<\rk A$ and $C$ is a $U\choose l$-matrix.
\end{lemma}


Denote by $r_m$ the minimal rank of an ${[m]\choose l}$-matrix.
Denote by $\t{r_m}$ the minimal rank of an even ${[m]\choose l}$-matrix.
Clearly, $r_m=\t{r_m}=0$ for $m\le2l-2$, both sequences $r_m,\t{r_m}$ are non-decreasing, and $r_m\le\t{r_m}$.

\begin{proposition}\label{p:rankest} (a) $\t{r_m}\ge\t{r_{m-l}}+2$;

(b) $r_m\ge\min\{r_{m-l+1}+1,\t{r_m}\}$ (more precisely, either $r_m=\t{r_m}$ or $r_m\ge r_{m-l+1}+1$).
\end{proposition}

\begin{proof}[Proof of (a)]
Take an even $[m]\choose l$ matrix $A$ such that $\rk A=\t{r_m}$.
By the non-triviality $A\ne0$.
Then there are $l$-element subsets $X,Y\subset[m]$ for which the assumptions of Lemma \ref{l:step-alga} are fulfilled.
Then
$$\t{r_m} = \rk A \overset{(1)}\ge \rk A|_{[m]-X}+2 \overset{(2)} \ge \t{r_{m-l}}+2,\quad\text{where}$$


$\bullet$ the inequality (1) follows from Lemma \ref{l:step-alga};

$\bullet$ the inequality (2) holds because $A|_{[m]-X}$ is even, and is an $[m]-X\choose l$-matrix by Lemma \ref{p:hered}.
\end{proof}

\begin{proof}[Proof of (b) modulo Lemma \ref{l:step-alg}]
Take an ${[m]\choose l}$-matrix $A$ such that $\rk A = r_m$.
If $A$ is even, then $r_m = \t{r_m}$, so we are done.
Otherwise there is a $l$-element subset $X\subset [m]$ such that $A_{X,X}=1$.
Then by Lemma \ref{l:step-alg}
$$r_m = \rk A \overset{(1)}\ge \rk C + 1\overset{(2)}\ge r_{m - l + 1} + 1, \quad\text{where}$$

\aronly{$\bullet$ $C$ is the matrix defined in Lemma \ref{l:step-alg};}

$\bullet$ the inequality (1) follows from Lemma \ref{l:step-alg};

$\bullet$ the inequality (2) holds because $C$ is an $[m]-X\choose l$-matrix by Lemma \ref{p:hered}.
\end{proof}

\begin{proof}[Proof of Theorem \ref{p:rank} modulo Lemma \ref{l:step-alg}]
Theorem \ref{p:rank} asserts that $r_m\ge\dfrac{m-2l+2}{l-1}$ and $\t{r_m}\ge\dfrac{2(m-2l+2)}l$.

The inequality for $\t{r_m}$ is proved by induction on $m$.
The base $m\le 2l - 2$ is clear.
By Proposition \ref{p:rankest}.a and the inductive hypothesis we have
$$\t{r_m} \geqslant \t{r_{m-l}}+2 \geqslant \frac{2(m - 3l + 2)}{l} + 2 = \dfrac{2(m-2l+2)}l.$$

The inequality for $r_m$  is now proved by induction on $m$.
The base $m\le 2l - 2$ is clear.
By the inequality for $\t{r_m}$, Proposition \ref{p:rankest}.b and the inductive hypothesis we have
$$r_m \geqslant \min\left\{r_{m-l+1}+1,\t{r_m}\right\} \geqslant
\min\left\{\frac{m-3l+3}{l-1}+1,\frac{2(m-2l+2)}l\right\} = \frac{m-2l+2}{l-1}.$$
\end{proof}
 
\begin{proof}[Proof of Lemma \ref{l:step-alg}]
\aronly{\footnote{Before reading this proof one can warm up by reading Remark \ref{r:step-algb}.a.}}
In this paragraph we prove that $\rk C<\rk A$ (as suggested by S. Zhilina).
Let $A'$ be the `restriction' of $A$ to $X$ and all $l$-element subsets of $U$. 
Subtracting the $X$-line from some lines corresponding to subsets of $U$,  
we obtain a matrix $A''$  whose $X$-row consist of zeros except for $A''_{X,X}=1$. 
Then subtracting the $X$-row from some rows corresponding to subsets of $U$,  
we obtain a matrix $A'''$  whose $X$-line also consist of zeros except for $A'''_{X,X}=1$. 
The restriction of $A'''$ to all $l$-element subsets of $U$ coincides with $C$. 
Thus $\rk C = \rk A'''-1 = \rk A'-1 \le \rk A-1$. 

In this paragraph we prove that $C$ satisfies the independence property.
If $P \cap Q = \emptyset$, then either $P \cap X = \emptyset$ or $Q \cap X = \emptyset$.
Hence $C_{P, Q} = A_{P, Q} + A_{P, X}A_{Q, X} = 0 + 0 = 0$.

In this paragraph we prove that $C$ satisfies the linear dependence property.
For each $(l+1)$-element and $l$-element subsets $F,P\subset U$ we have
$$\sum\limits_{i \in F} C_{F - i, P} =
\sum\limits_{i \in F} A_{F - i, P} + A_{P, X} \sum\limits_{i \in F} A_{F - i, X} = 0.$$

In this paragraph we prove that $C$ satisfies the non-triviality property.
By Lemma \ref{p:rankher} below we may assume that $i$ is not the element of $U-X$.
So for every summand $C_{i\sqcup \sigma,i\sqcup\tau}$ of $C_{F,i}$, at least one of the sets $i\sqcup \sigma,i\sqcup\tau$ 
does not contain the element of $U-X$, hence that set does not intersect $X$.
Then by the independence $C_{i\sqcup \sigma,i\sqcup\tau} = 
A_{i\sqcup \sigma,i\sqcup\tau} + A_{i\sqcup \sigma, X}A_{i\sqcup\tau, X} = 
A_{i\sqcup \sigma,i\sqcup\tau}$.
Thus $C_{F,i}=A_{F,i}=1$.
\end{proof}

Recall that $C_{F,i}$ (in the notation $A_{F,i}$) is defined in the non-triviality property.

\begin{lemma}[cf. \aronly{Remark \ref{r:rankher}}\jonly{{\cite[Remark 3.7]{KS21}}}]\label{p:rankher}
Let $C$ be a 
matrix 
whose rows and whose columns correspond to all $l$-element subsets of a finite set $U$, and for which the linear dependence property holds. 
Then for each $i\in U$ and $(2l-2)$-element subset $F\subset U-i$ the residue $C_{F,i}$ depends only on $F\sqcup i$ not on $(F,i)$.
\end{lemma}

\begin{proof}
It suffices to prove that $C_{G\sqcup i,j}=C_{G\sqcup j,i}$ for each $i,j\in U$ and $(2l-3)$-element subset $G\subset U-i-j$.
Denote $\overline\sigma := \{i, j\}\sqcup\sigma$.
Then
$$C_{G\sqcup j,i}+C_{G\sqcup i,j} \overset{(1)}=
\sum\limits_{\{(\sigma,\tau)\ :\ G=\sigma\sqcup \tau,\ |\sigma|=l-2\}}
\left(C_{\overline\sigma, i\sqcup\tau} + C_{\overline\sigma, j\sqcup\tau}\right) \overset{(2)}=$$
$$= \sum\limits_{\{(\sigma,\tau)\ :\ G=\sigma\sqcup \tau,\ |\sigma|=l-2\}}\ \sum\limits_{t \in \tau} C_{\overline\sigma, \overline{\tau - t}}\overset{(3)}=
\sum\limits_{t\in G}\ \sum\limits_{\{(\sigma,\nu)\ :\ G-t=\sigma\sqcup\nu,\ |\sigma|=l-2\}}
C_{\overline\sigma, \overline\nu}
\overset{(4)}=0,\quad\text{where}$$

$\bullet$ the equality (1) holds because $C_{G\sqcup j,i}$ is equal to the sum of the first summands $C_{\overline\sigma,i\sqcup\tau}$,
and $C_{G\sqcup i,j}$ is equal to the sum of the second summands $C_{\overline\sigma,j\sqcup\tau}$;

$\bullet$ the equality (2) holds by the linear dependence for $F=\overline\tau$, $P=\overline\sigma$;

$\bullet$ the equality (3) is obtained by changes of the order of summation, and of variable $\nu=\tau-t$;

$\bullet$ the equality (4) holds because ordered decompositions $(\sigma,\nu)$ of $G-t$ into $(l-2)$-element subsets $\sigma,\nu$ split into pairs $\{(\sigma,\nu),(\nu,\sigma)\}$ and $C_{\overline\sigma,\overline\nu}+C_{\overline\nu,\overline\sigma}=0$.
\end{proof}

\aronly{

\begin{remark}\label{r:step-algb} 
(a) Let us present a simple proof that $r_m\ge\min\{r_{m-l}+1,\t{r_m}\}$ (more precisely, either $r_m=\t{r_m}$ or $r_m\ge r_{m-l}+1$). 
(This estimate is slightly weaker than Proposition \ref{p:rankest}.b, gives an estimate slightly weaker than Theorem \ref{p:rank}, and illustrates 
Lemma \ref{l:step-alg}.) 


Take an $[m]\choose l$ matrix $A$ such that $\rk A=r_m$.
If $A$ is even, then $r_m=\t{r_m}$, so we are done.
Otherwise there is a $l$-element subset $X\subset [m]$ such that $A_{X,X}=1$.
Let $A'$ be the `restriction' of $A$ to $X$ and to $l$-element subsets of $[m]-X$.
Then
$$r_m=\rk A\ge\rk A'\overset{(1)}=\rk A|_{[m]-X}+1\overset{(2)}\ge r_{m-l}+1,\quad\text{where}$$

$\bullet$ the equality (1) holds because $A_{X,X}=1$ and by the independence $A'_{X,Z}=0$ for any $Z\subset [m]-X$;

$\bullet$ the inequality (2) holds because $A|_{[m]-X}$ is an $[m]-X\choose l$-matrix by Lemma \ref{p:hered}.

(b) Proposition \ref{p:rankest}.b is implied by the following conjecture (more natural than Lemma \ref{l:step-alg}). 

\emph{Let $A$ be a 
matrix 
whose rows and whose columns correspond to all $l$-element subsets of $[m]$, and for which the independence holds. 
Assume that $A_{X,X}=1$ for some $X\subset[m]$. 
Then there is an $(m-l+1)$-element subset $U\subset[m]$ such that $\rk A|_U<\rk A$ for the 
restriction $A|_U$ of $A$ to $U$. 
(Perhaps as $U$ one can take the union of $[m]-X$ and any element from $X$.)} 

However, a discussion with S. Zhilina shows that the conjecture is apparently false. 
\end{remark}

\begin{remark}\label{r:rankher}
Lemma \ref{p:rankher} is a combinatorial version of \cite[Lemma 20]{PT19}, and generalizes the following long known fact to higher dimensions, 
i.e. to $(k+1)$-element subsets of $[2k+3]$:


{\it Denote by $X={{[5]\choose2}\choose2}$ the set of unordered pairs of 2-element subsets of $[5]$.
For any $i\in[5]$ and a partition $[5]-i=\sigma\sqcup\tau$ into disjoint 2-element sets denote
$$T_{i,\{\sigma,\tau\}}:=\{\{\alpha,\beta\}\in X\ :\  \alpha\subset\sigma\sqcup i,\ \beta\subset\tau\sqcup i\}.$$
Denote by $C_i$ the sum modulo 2 of sets $T_{i,\{\sigma,\tau\}}$ over all non-ordered partitions $[5]-i=\sigma\sqcup\tau$ as above.
Then
$$C_i=\{\{\alpha,\beta\}\in X\  :\ \alpha\cap\beta=\emptyset\}$$
and so is independent of $i$.}


For this fact see e.g. surveys \cite[Lemmas 2.2'.c and 2.8'.c]{Sk14}, \cite[Lemma 2.3.6 for $K=K_5$]{MNS}. 
\aronly{If 2-element subsets of $[5]$ are considered as edges of $K_5$, then
$C_i$ is \emph{the quotient deleted product} of $K_5$.
In the notation of the non-triviality for $U=[5]$, $l=3$, $F=F_i=[5]-i$, a general position map $f:K_5\to\R^2$,
and $A_{P,Q}=f\partial P\cap f\partial Q$ we have that $C_{F_i,i}$ is the scalar product with $C_i$ of
\emph{the intersection cocycle} of $f$, so $C_{F_i,i}$ is \emph{the van Kampen number} of $f$.}
\end{remark}

\begin{proof}[An alternative proof of Lemma \ref{l:step-alga} (closer to \cite{PT19})]
Take a basis of $\Z_2^{{m\choose l}}$ corresponding to $l$-element subsets of $[m]$.
Define a bilinear form $A$ on $\Z_2^{{m\choose l}}$ by setting $A(P,Q):=A_{P,Q}$ for basic vectors $P,Q$.
Take any $l$-element set $P\subset[m]$.
Let
$$\overline P=\overline P(X,Y):=P+A_{X,P}Y+A_{Y,P}X.$$
Recall that
$$(*)\qquad A_{X,Y}=A_{Y,X}=1\quad\text{and}\quad A_{X,X}=A_{Y,Y}=0.$$
Hence
$$(**)\qquad A(\overline P,X)=A(\overline P,Y)=0$$
(i.e., $\overline P$ is the orthogonal projection of $P$ to the orthogonal complement of $\left<X,Y\right>$ with respect to $A$).
By the independence, for $P\subset[m]-X$ we have $\overline P=P+A_{Y,P}X$.
Hence for every $l$-element sets $P,Q\subset [m]-X$ we have
$$(***)\qquad A(\overline P,\overline Q) = A_{P, Q}+0+0+0 = B_{P, Q}.$$
(I.e., $B$ is the Gramian matrix with respect to $A$ of the `projections' $\overline P$ of $l$-element sets $P\subset[m]-X$.)

Let $B'$ be the Gramian matrix with respect to $A$ of $X,Y$ and the `projections' $\overline R$ of $l$-element sets $R\subset[m]-X$.
I.e., $B'_{P,Q}=A(\widehat P,\widehat Q)$, where $\widehat P=P$ if $P\in\{X,Y\}$, and $\widehat P=\overline P$ otherwise; $\widehat Q$ is defined analogously.
Then

$\bullet$ $B'_{X,Y} = B'_{Y,X} = 1$, $B'_{X, X} = B'_{Y, Y} = 0$ (by (*)),

$\bullet$ $B'_{X,P} = B'_{P,X} = B'_{Y,P} = B'_{P,Y} = 0$ for $P\ne X,Y$ (by (**)), and

$\bullet$ $B'_{P,Q} = B_{P,Q}$ for $P,Q\subset[m]-X$ (by (***)).

Hence $\rk B+2 = \rk B' \le \rk A$.
\end{proof}

\begin{proof}[An alternative proof that $\rk C<\rk A$ in Lemma \ref{l:step-alg} (closer to \cite{PT19})]
Take a basis of $\Z_2^{{m\choose l}}$ corresponding to $l$-element subsets of $[m]$.
Define a bilinear form $A$ on $\Z_2^{{m\choose l}}$ by setting $A(P,Q):=A_{P,Q}$ for basic vectors $P,Q$.
Let $P_X$ be the orthogonal projection of $P\in \Z_2^{{m\choose l}}$ to the orthogonal complement of $X$ (with respect to $A$), i.e. $P_X:=P+A_{P,X}X$.
We have
$$A(P_X,Q_X) = A(P,Q)+A(A_{P,X}X,Q)+A(P,A_{Q,X}X)+A(A_{P,X}X,A_{Q,X}X) =$$
$$= A_{P,Q}+A_{P,X}A_{X,Q}+A_{P,X}A_{Q,X}+A_{P,X}A_{Q,X}A_{X,X} = A_{P,Q} + A_{P,X}A_{Q,X} = C_{P,Q}.$$
Then $C$ is the Gramian matrix (with respect to $A$) of the projections of subsets of $U$.
Let $C'$ be the Gramian matrix (with respect to $A$) of $X$ and the projections of subsets of $U$.
We have $C_{P, Q} = C'_{P, Q}$ whenever $P, Q \subset U$.
Furthermore, $C'_{X,P} = C'_{P, X} = 0$ for any basic vector $P \ne X$, and $C'_{X, X} = A_{X, X} = 1$.
Thus $\rk C = \rk C' - 1 <\rk A$.
\end{proof}

}

\section{Appendix}

\begin{remark}\label{r:lett} This remark is formed by public letters discussing the questions

$\bullet$ if there is a conflict of interest in publication of the expository part of this paper;

$\bullet$ should we update an arxiv version of our paper when the current arxiv version is under review;

$\bullet$ should a paper in a refereed journal conceal or reveal its methodology.

(The first question was raised by M. Tancer, see letter of May 8 below, and the other two turned out to be relevant.)

Discussion of our actions in such practical situations reveals our understanding `for whom research is done' \cite{Sk21d}.
Such an understanding is sometimes concealed, so we are grateful to M. Tancer for stating his opinion, in spite of it being partly different from ours.

These letters might be interesting as an example of an open discussion of a controversial question,
carried in full mutual respect of participants of the discussion, and leaving the final decision to the reader.
(See the motivation for publicity in the letter of May 13, 2021 below.)

No reply to the letter of June 5
was received by the time of arxiv submission of this paper.
So the discussion seems to have its final form ready for a reader's judgement.
(If available, an update of this discussion will be presented here.)

\small

\smallskip
{\it (A. Skopenkov to P. Patak and M. Tancer,
May 8, 2021)}

Dear Martin,  Dear Pavel,

Martin raised the `conflict of interests' question in his February letter.
So in our paper with Eugene we need to publish our opinion on that,
and to publicly invite you to publish your opinion.
It would be nice if we could find a phrase that satisfies all of us.
However, there is nothing wrong to present our different opinions so that a reader could make his/her own judgement.
Below please find some suggestions for Remark 2f to be added to our paper.
Could you please either choose any of them, or suggest your own phrase we could agree with,
or make your own public statement?

{\it (1) We are grateful to M. Tancer and P. Patak for confirming that there is no conflict of interest in publication of this paper, in spite of A. Skopenkov has been a referee of \cite{PT3} since March to August of 2020.

(2) We are grateful to M. Tancer and P. Patak for confirming that there is no conflict of interest in publication of this paper, in spite of A. Skopenkov has been a referee of \cite{PT3} since March to August of 2020.
This is so because

$\bullet$ the paper \cite{PT3} is openly published on arxiv, and so is available for praise, for criticism, as well as for building upon, properly mentioning the authors' contribution;

$\bullet$  the current paper properly mentions the contribution of \cite{PT3}, stating that it only presents an exposition of the Partak-Tancer results;

$\bullet$ I am not a referee of \cite{PT3} since August of 2020;

$\bullet$  the current paper is based upon suggestions I made in frame of
my being referee of \cite{PT3}, and the authors have chosen not to realize those suggestions, at least in their full extent leading to a short exposition presented here. (Even the improved version of \cite{PT3} partly realizing these suggestions from summer of 2020 is not available to the math community in May of 2021 and the authors kindly informed me in Fall 2020 and again in May, 2021 that they have no estimation for the date when they will make that improved version available to the math community.\footnote{[added in 2022] The improved version of \cite{PT3} is now available as \cite{PT19}.})


(3) In our opinion, there is no conflict of interest in publication of this paper,  in spite of A. Skopenkov has been a referee for \cite{PT3} since March to August of 2020.
This is because [bullet points from (2)].
We are grateful to M. Tancer and P. Patak for learning our opinion presented above and stating that they do find a conflict of interest in publication of this paper because
[here a text from MT and PP is to be presented].

(4) In our opinion, there is no conflict of interest in publication of this paper, in spite of A. Skopenkov has been a referee for \cite{PT3} since March to August of 2020.
This is because [bullet points from (2)].
We asked M. Tancer and P. Patak to read our opinion presented above and publicly state if they find a conflict of interest in publication of this paper.
We are sorry they did not make any public statement on that issue.}

Best Regards, Arkadiy.

\smallskip
{\it (M. Tancer to A. Skopenkov,
May 12, 2021)} [A private letter.]

\smallskip
{\it (A. Skopenkov to P. Patak and M. Tancer, May 13, 2021)}

Dear Martin and Pavel,

Dear Martin, thank you for your letter.
Recall that we need a public not private statement from you.
So could you please

--- confirm that the statement you sent us  May 12, 2021 is public.

--- add either `M. Tancer' or `M. Tancer and P. Patak' at the end of the statement.

Then we would be able to publish that statement, together with my explanation why it misrepresents facts.
There is nothing wrong if you would modify your statement before confirming that the statement is public.

We strongly need this discussion to be responsible.
We do not have enough time to discuss premature ideas, whose invalidity becomes clear when their publication
(or a mental experiment of publication) is suggested.
So if the statement you sent us  May 12, 2021 is not public, the best way is to treat it as non-existent.


Therefore I inform you that each of us can possibly publish any letter on this subject starting from this letter.
After presenting the letters we can possibly write whether we agree or disagree, and/or give explanations.
If a part of such a public discussion would become obsolete, we could delete that part (only) by our mutual consent.

Such a public discussion, although very useful, would require much effort.
So let us find a way to avoid it.
E.g. if you would set up a reasonable deadline for arxiv publication of update of arXiv:1904.02404v3,
then we would be willing to postpone the arxiv publication of our paper so that it would appear after your update.
(Actually, Eugene and I worked on our paper very slowly in the hope that such an update would be available before our paper will be ready for arxiv submission.)
The deadline being reasonable means that this postponing would not obstruct too much the progress of science.
Publication of our paper after an update of yours would make the question (of conflict of interest) void.
If you want, we can discuss by skype / zoom this or other propositions.

Best, Arkadiy.

\smallskip
{\it (M. Tancer to A. Skopenkov, May 18, 2021)}\footnote{In July 2021 M. Tancer requested removal of the current paper from arxiv (arXiv:2106.14010) claiming that publication of the May 18 letter forms a copyright infringement.
This claim is unjust because
\newline
$\bullet$ the May 18 letter was sent in a reply to `Therefore I inform you that each of us can possibly publish any letter on this subject starting from this letter' of the May 13 letter;
\newline
$\bullet$ the May 18 letter did not mention either that `this letter is private' or that `this letter is copyrighted'.
\newline
I am sorry that M. Tancer did the above instead of
\newline
$\bullet$ consenting to my suggestion of removing most of the May 18 letter by mutual consent (see the June 5 letter above);
\newline
$\bullet$ asking to remove the entire May 18 letter in reply to versions of the current paper containing this letter, which were sent to M. Tancer and P. Patak on May 27 and on June 5, each time asking for remarks.
\newline
However, most of the May 18 letter is not relevant to the discussion.
So we were glad to incorporate here all requests made by M. Tancer on deleting this letter.}

\smallskip
{\it (A. Skopenkov to P. Patak and M. Tancer, May 27, 2021)}

Dear Martin and Pavel,

Attached please find the update of our paper.
We would be grateful for any remarks.

We deleted Remark 2e of the previous version.
We would be glad to restore that remark if you allow us to do so.
That remark praised your paper \cite{PT3} for presenting an argument which can easily be turned into a proof of  a certain result stated in my paper in a weaker form and only with a hint to a proof.
We deleted that remark because it used information that I received from you while I was a referee of your paper.

Recall that Martin stated that there is a `conflict of interests' in his letters of February and May, 2021.
The reasons why we need the discussion to be public are explained in my letter of May 13, 2021.

In my opinion, updating the arxiv version when the current arxiv version is under review, is a friendly action towards the referees, the Editors and math community (in case the update is essential enough).
This action {\it allows} the referees' and the Editors' decision on the paper to be more informed, if they are inclined to read or browse the update.
This action does not {\it force} the referees and the Editors to read the update
if they are not inclined to do so.


If you update your paper on arXiv, then we update our paper by replacing in Remark \ref{r:earlier} references to your paper with references to the update of your paper.
This would make the question of conflict of interest void.

Best, Arkadiy.

\smallskip
{\it (M. Tancer to A. Skopenkov, May 28, 2021)}
[A letter starting with `this letter is private' after `Therefore I inform you that each of us can possibly publish any letter on this subject starting from this letter' of May 13 letter.]

\smallskip
{\it (M. Tancer to A. Skopenkov, May 31, 2021)}


A. Skopenkov was a referee when we submitted \cite{PT3} to a journal.
We know this because he contacted us directly and he requested regular online
meetings while we were explaining the contents of our paper. In short, he
required some modifications that we could not accept as the authors which
yielded his recommendation to reject the paper. This may of course be a
legitimate approach. However, we believe that the fact
that he immediately started to work on the same topic puts him into the conflict of interests.
It therefore raises the question whether his intentions to reject the
paper were honest, or whether he wanted to promote his own work.

In addition, the further reason why A. Skopenkov is in conflict of
interests is that he had access to an extra information about paper and
about our methodology beyond the publicly available version of our paper
\cite{PT3}. He requested such an additional information as a referee. In
particular he had access to numerous intermediate revisions of our text
when we tried to rewrite several proofs, definitions, etc. in order to try
to satisfy the demands of the referee. The contents of this paper builds
on this extra information, at least partially.

\smallskip
{\it (A. Skopenkov to P. Patak and M. Tancer, June 5, 2021)}

Dear Martin and Pavel,

Attached please find the update of our paper.

(1) Let me start with a suggestion (approved by Eugene).
If you feel that some phrases (before Remark \ref{r:lett}) describe your contribution in a misleading way,
please list these phrases, explain what is wrong and/or suggest alternative phrases.
Our paper from its first version gave all the credit for main results to \cite{PT3}.
If you feel that besides that, some credit for {\it exposition} should be given to you, please name particular places (statements, elements of proofs etc.) for doing that.
If there would be many such places, we would be glad to invite you to be coauthors of this paper, on the condition that this would not lead to a significant delay and to making the text less accessible.

{\it Our intention is to facilitate progress in science by presenting without delay a short exposition of your beautiful proof, acknowledging your priority.}
(In May 2020 I hoped that this intention could be implemented within your own paper.)
I would be grateful if you could describe your intention.

(2) I am sorry that your letter misrepresents facts, and so necessarily arrives at a wrong conclusion.
Namely, the following passages are wrong: `requested', `we were explaining the contents of our paper'
`he required some modifications that we could not accept as the authors which yielded his recommendation
to reject the paper'\footnote{\label{f:reject} [added in 2022] Formally, this is correct and (as the authors write) legitimate.
However, since the May, 31 letter questions my `intentions to reject the paper', the report is presented in Remark \ref{r:report}, so that a reader can see that the reliability standards of this report agree with my reliability standards for other papers as exposed in \cite{Sk21d}.}, `he immediately started to work on the same topic'\footnote{[added in 2022] This (wrong and unjustified) guess is erroneously called `the fact' in the May, 31 letter. The authors misrepresent my wish to help them as my starting to work on the same topic.
E.g. suggestions on the proof of Theorem \ref{t:patan} were realized in this paper a year after the authors refused to fully incorporate them (see also Remark \ref{r:earlier}); suggestions from Remark \ref{r:report}.8 were realized as \cite[Proposition 2.5.1.RI Remark 2.5.2.b]{KS21e} more than a year after the authors refused to incorporate them (see also \cite{PT19}).}, `He requested such an additional information as a referee'.
Indeed,

$\bullet$ The discussions (meetings) were suggested not requested. In these discussions I presented critical remarks to the paper and suggestions on how to work on them. In particular, I did not request any extra information about \cite{PT3} and about the methodology beyond \cite{PT3}. See also my letter to the Editor in Remark \ref{r:aut}.


$\bullet$ My rejection recommendation was based not on the authors' refusal to accept my suggestions, but on specific critical remarks (1)-(8) of Remark \ref{r:report} and on the poor work of the authors on these remarks.\footnote{[added in 2022] See footnote \ref{f:reject}.}

$\bullet$ Eugene and I started to work on the current paper in November 2020.
This is long since you informed me in July 2020 that you are not going to incorporate my suggestions
(in their full extent leading to a short well-structured exposition presented here [added in 2022: and in \cite{KS21e}]).
Eugene and I worked on an {\it expository} paper giving all the credit to \cite{PT3}.

I am sorry that your letter presents no specific examples of `extra information' and `the contents of this paper'
mentioned in the phrases {\it `he had access to an extra information about paper and about our methodology beyond
the publicly available version of our paper'} and {\it `The contents of this paper builds on this extra information'}.
So an outside observer can only conclude that there are no such examples, and your conclusions are wrong.
If you would like to present such examples, see (1).

(3) In my opinion,
if some version of a paper requires explanations of its contents (even to a colleague working in a close area) and conceals some methodology,
then the version should not be published in a refereed journal.
If the authors discover such drawbacks in the frame of a refereeing process, then the best way is to withdraw the paper (and possibly resubmit a revision when it is ready).
Please let me know if you have a different opinion.

(4) In reply to your letter of May 28, recall that this discussion is public, see my letter of May 13.
So, if I receive any letter on this subject described to be a private letter, I'll have to delete it unread (to avoid confusion).
Recall that I suggested avoiding a public discussion by having a (private) skype / zoom discussion.
Also, I would be glad to discuss any other subject in any form you like
(until that subject would require a public discussion for reasons explained in my letter of May 13).

Best regards, Arkadiy.\footnote{Arkadiy suggested to delete this by mutual consent:
\newline
PS Could you please approve or disapprove my suggestions to delete by mutual consent
some material including this footnote.
\newline
PSS There is nothing wrong in delaying your answer.
I do not consider a 5-days reply to be a delay.
I suggest omitting publication of this PS and of PS from your May 18 letter by our mutual consent
(see my letter of May 13), so as not to flood the main topic of the discussion.}

\smallskip
{\it (A. Skopenkov to P. Patak and M. Tancer, February 26, 2022)}

Dear Martin and Pavel,

Attached please find the project of the update of arXiv:2106.14010.
We would be grateful for any remarks.
Some annoying flaws in our and your papers survived to arXiv publication because of our lack of exchange on these papers.  

We would be glad to remove Remarks \ref{r:aut} and \ref{r:report} if you publicly state something like
{\it `The reliability standards of A. Skopenkov's report to \cite{PT3}
agree with his reliability standards for other papers as exposed in \cite{Sk21d}.
So, as opposed to our letter of May, 31, 2021, there is no reason to believe that his reasons for recommending rejection were dishonest.}

We would be glad to remove Remark \ref{r:lett} if you publicly withdraw your claim (shown to be wrong by Remark \ref{r:lett}) that there is a conflict of interest in publication of this paper, see (1) and (2) of my May, 8, 2021 letter.

Best regards, Arkadiy.
\normalsize
\end{remark}
 
\small

\begin{remark}[A. Skopenkov. A letter to an Editor of June 2020]\label{r:aut}
Dear ...,

Hope you are fine and healthy.

The Patak-Tancer paper submitted to ... has high potential but needs a thorough revision.
Instead of thoroughly justifying these statements in a formal report I suggested to the authors (and they accepted)
a more effective way of discussing specific remarks and suggestions directly with the authors and helping them to prepare a revision.  (Cf. arXiv:2003.12285v1, Remark 3a:
Journal publications practically rule the mathematical world.
So writing a referee report on a paper is a responsible task involving double-checking.
In this time-consuming form it is much harder to help the author than via informal discussions.).

The result of these discussions would be my final report (hopefully with acceptance recommendation)
together with a list of my  suggestions (hopefully minor) which the authors intentionally did not realize.
The authors are making good progress on my suggestions.
There are points where we disagree, but we are likely to reach a compromise.
For this compromise we would need an opinion of yours (or of another referee) on those points,
or just a report from another referee.
We plan to send a letter describing our question in about a week's time.
Since (I am sorry) I missed the deadline 30 May 2020, I decided to inform you right away on our plans.

I am sorry if this non-standard approach, however more effective than a formal report,
will take more of your time than you intend to spend.
I am willing to be as consistent with the standard system as not to do ineffective work.
E.g. please let me know if I should upload this letter as a `letter to the Editor' to the Editorial system.

Best wishes, Arkadiy.
\end{remark}
 
 
\begin{remark}[A. Skopenkov. A letter to an Editor of August 2020]\label{r:report}
Here I present a public report to \cite{PT3} (this report is prepared for some journal).
Although this report is public, I only plan to publish it, together with objections to it, if some objections would be raised outside our private discussions with the authors.\footnote{The report is kept here
as a justification of footnote \ref{f:reject}, although most of the criticism is not relevant to \cite{PT19}.
In this report references are updated and grammar typos are corrected, but no other changes are made.}
I will not answer to any objections which are not public.
Motivations for that are presented in `Work on critical remarks' below.

I invited the authors to add to this public text their objections (if any).
They disagreed.
Still, the report below is too detailed at some places to rule out potential objections
(which appeared in our discussions).

I am so sorry I have to spend time on writing and double-checking this text rather than on informal private skype discussions helping the authors to improve the paper.

\smallskip
{\bf Recommendation.}

I recommend to reject the paper in its current form (i.e. in the submitted form), and to invite the resubmission of a shorter paper containing bright results (Theorem 5, its corollary Theorem 6 and Corollary 7; the shorter paper can well have a short remark on the van Kampen obstruction).

Presumably  problems from my critical remarks below could be fixed.
However, in my opinion our work on these remarks in June-July (but not our work in April-May) described below
shows that the amount of time and efforts required from the referee and the Editors to bring the part of the paper involving the van Kampen obstruction to publishable form is not worth the corresponding results of the paper.
This justifies the rejection recommendation.

The authors' work on my remarks in April-May (not presented here) was very fruitful and so justifies the invitation recommendation.

\smallskip
{\bf Critical remarks.}

Unless otherwise indicated, I presented these remarks (some of them in a less explicit form) in our April-May skype discussions with the authors.
I use references as listed in the paper and the following reference.

[BKK] {\it M. Bestvina, M. Kapovich and B. Kleiner. Van Kampen's embedding obstruction for discrete groups, Invent. Math. 150 (2002) 219--235. arXiv:math/0010141.}


\smallskip
(1)  I suggest to postpone technical results and definitions to later (sub)sections and bring bright results to earlier (sub)sections.
Most important example
of this is to present the (generalized) van Kampen obstruction after the results not involving the obstruction (or even to extract all considerations of the obstruction to a separate paper to be submitted to a less rated journal).
E.g. to move \S1.1 after \S1.3 (or maybe even later), \S2.2 after \S2.3 (or maybe even later).
See (2)-(7) below.

\smallskip
(2) We agree with the authors that bright results of this paper are Theorem 5, its corollary Theorem 6 and Corollary 7.
Statements of these results do not use the van Kampen obstruction.
Proofs of these results are non-trivial and interesting.
The proofs do not use (or can easily be simplified not to use) the van Kampen obstruction.
They need much less technical (generalization of) $\Z_2$-valued van Kampen invariant, cf.  arXiv:1805.10237, \S1.4.
Thus is misleading to call these bright results applications of the van Kampen obstruction, i.e of Theorem 1.
See \cite[Remark 1.1.b]{Sk21d}.

\smallskip
(3) Previous research suggests that the van Kampen obstruction is useful for algorithmic applications [MTW11], but
only provides unnecessary sophistication for proofs of bright results. Compare the following papers:

\emph{B. Ummel. The product of nonplanar complexes does not imbed in 4-space, Trans. Amer. Math. Soc., 242 (1978) 319--328.}

\emph{M. Skopenkov. Embedding products of graphs into Euclidean spaces,
Fund. Math. 179 (2003),~191--198, arXiv:0808.1199.}

The introduction does not present  algorithmic applications except minor Theorem 10 (cf. (2) above).
In my opinion, bright results and their proofs are potentially more useful than technical versions of known constructions which so far did not yield any bright results.

\smallskip
(4) There are many versions of the classical van Kampen obstruction.
See [Mel09], [BKK] and the following paper.\footnote{Possibly I did not mention [BKK, RS] in April-May discussions with the authors.
However, if the authors consider their generalization of the van Kampen obstruction as important as to be presented at the beginning of the introduction, then it would be natural to search earlier publications on such  generalizations before submission, or at least after our April-May discussions (and before presenting the version justifying the authors' wish to present van Kampen obstruction at the beginning of the introduction, see `Work on critical remarks' below).}

[RS] \emph{D. Repov\v s and A. B. Skopenkov. A deleted product criterion for approximability of a map by embeddings, Topol. Appl. 1998. 87 P.~1-19.}

Not citing these references is a negligible drawback if the authors present their yet another version of the van Kampen obstruction outside the introduction.
Since the authors consider their version as important as to be presented at the beginning of the introduction, not citing [BKK, RS] is a more serious drawback.

Yet another new version (Theorem 1) is not interesting enough, unless it yielded some bright results whose statements do not involve technical description of this version.
According to the introduction, Theorem 1 and Corollary 3 did not yield such results.

\smallskip
(5) No motivations for technical description of the van Kampen obstruction in \S1.1 is given.
Statements of the main results are started in \S1.1 with
{\it `We need some technical preliminaries. Also, for some notions we will not give a precise
definition yet as we would need too many preliminaries in the introduction, but all notions are explained
in Section 2.'}
After those sentences (and a meaningless formula, see (6) below) a reader is likely to put away the paper and not to reach the bright results (Theorem 5, Theorem 6 and Corollary 7).


\smallskip
(6) The van Kampen obstruction is required for the statements of Theorem 1, Corollary 3, Theorem 4 and for the proofs (not for the statements) of Proposition 8 and Theorem 10.
These statements and proofs are not rigorous because they rely on definitions involving meaningless formula
$$(*)\qquad\xi(\sigma\times\tau)=(-1)^k\xi(\tau\times\sigma).$$
This formula of \S1.1 is meaningless (according to common definition of chains with integer coefficients, see textbooks by Fomenko-Fuchs, Hatcher  or

\url{https://en.wikipedia.org/wiki/Simplicial_homology#Definition}).


Indeed, no orientation on $\sigma\times\tau$ (or on $\sigma,\tau$) is chosen, so the number
$\xi(\sigma\times\tau)$ is not defined (only its absolute value is defined).\footnote{I did not mention this  particular formula in our April-May skype discussions with the authors.
However, I did mention that fixing orientation is necessary to work with $\Z$-coefficients, in relation to other part of the text, and I did refer to Remark 1.6.4 of arXiv:1805.10237. So it would be natural to correct analogous mistake in the version justifying the authors' wish to present van Kampen obstruction at the beginning of the introduction, see `Work on critical remarks' below).}
Analogously, the number $\xi(\tau\times\sigma)$ is not defined.
Observe that the sign is important in the formula (*).

Presumably the authors use some non-specified non-standard orientation convention which lead them to
skew-symmetric cochains on $\t K$ rather than symmetric cochains on $\t K$ isomorphic to ordinary cochains on
$\t K/\Z_2$ as in [Sha57, \S3] and in the survey [Sko08, \S4].
No motivation for this sophistication is presented.

\smallskip
(7) The construction of \S1.1 is not invariant, i.e. it uses (co)chains, not (co)cycles and (co)homology classes;
this makes it less valuable from a theoretical point of view.



\smallskip
(8) {\it A possible compromise} is to state mod 2 version of Theorem 1 in the following user-friendly way similar to the Sarkaria's definition of the obstructor complex, see e.g. [BKK, Definition 4 in \S2].
(There is perhaps an analogous statement over integers,  with analogous proof; hopefully this invariant statement needs neither technical condition (H) nor (H').)

{\bf Theorem 1.}
Let $K$ be a $k$-complex, $M$ a PL $2k$-manifold, and $f:|K|\to M$ be a generic map homotopic to an embedding.
Assume that

(H') the restriction of $f$ the $(k-1)$-skeleton of $K$ is null-homotopic.

Denote $\t K:=\cup\{\sigma\times\tau\in K\times K\ :\ \sigma\cap\tau=\emptyset\}$.
Let $C$ be any $2k$-cycle modulo 2 in $\t K/\Z_2$ whose preimage in $\t K$ equals to
$\sum_j(\alpha_j\times\beta_j + \beta_j\times\alpha_j)$
for some $k$-cycles $\alpha_j,\beta_j$ in $K$.
Then
$$\sum_j |f\alpha_j\cap f\beta_j|
\equiv
\sum\limits_{ \{\sigma,\tau\} \in C} |g\sigma\cap g\tau|
\mod2.$$
(The later sum equals to the value on $C$ of the classical van Kampen obstruction of $K$.)

{\bf Remark 2.}
In more theoretical terms the conclusion of Theorem 1 means (at least for $(k-1)$-connected $M$) that the classical van Kampen obstruction of $K$ has to be the image of the mod 2 intersection form of $M$ under the composition
$$\mbox{Hom}_{sym}(H^k(M)\otimes H^k(M);\Z_2)\to
(H^k(M)\otimes H^k(M))_{sym}\overset{\kappa}\to H^{2k}(M\times M)_{sym}\overset{(f\times f)^*}\to $$
$$\to H^{2k}(K\times K)_{sym}\overset{r}\to H^{2k}_{sym}(\t K)\overset{\cong}\to H^{2k}(\t K/\Z_2).$$
Here coefficients $\Z_2$ are omitted, $(H^k(M)\otimes H^k(M))_{sym}$ is the subgroup of $H^k(M)\otimes H^k(M)$ generated by elements $\alpha\otimes\alpha$ and $\alpha\otimes\beta+\beta\otimes\alpha$,
the homomorphism $\kappa$ comes from the K\"unneth theorem and $r$ is the restriction.

\smallskip
{\bf Work on critical remarks.}

Upon my suggestion and agreement of the authors and the Editor, in April-May the authors and I discussed my critical remarks and suggestions by skype.
My identity as a referee was revealed upon my wish.
Our discussions  were very effective, cf. \cite[Remark 1.4.a]{Sk21d}.
The authors greatly improved the text.


For the parts of the paper involving the van Kampen obstruction
we did not agree and we wanted to ask the Editor (and possibly another referee) to step in.
Namely,

$\bullet$ we agreed that Theorem 4, Proposition 8 and Theorem 10 are less important than other results of the introduction;

$\bullet$ we only disagreed on the comparative value of Theorem 1 (+Corollary 3) versus
the amount of motivations and technical details required for its statement.


The authors prepared updated version of the paper
to illustrate the differences and to justify their point of view.
For justification of my rejection recommendation it suffices to know that the above critical remarks on the submitted version are also pertinent for the updated version, although I presented these remarks
in our April-May skype discussions (with  exceptions mentioned in the footnotes above, which are irrelevant for the rejection recommendation).
Since it was not so easy for the authors to prepare an update properly motivating and rigorously defining the van Kampen obstruction, it would be hard for a reader to understand the motivation and the definition.

We also planned a common letter to the editor containing descriptions of our different opinions.
In preparation of this letter there appeared less-responsible and/or unjustified passages (i.e. those which would not stand the test of making them public).
So I required a public discussion where none of our letters is changed after its writing (the resulting dialogue to be sent to the Editor, but not to be put it on the internet unless we will be forced to).
I had to make our letter public in order to

$\bullet$ make discussion of our different opinions more responsible;

$\bullet$ obstruct unjustified criticism of my report (if such a criticism would appear);

$\bullet$ submit to justified criticism of my report (if such a criticism would appear).

The authors disagreed.
Instead, they agreed to incorporate my April-May suggestions on the van Kampen obstruction (at least in a compromise form we considered in April-May).
Besides my repeating critical remarks in written form and impossibility of further less-responsible discussion,
there could be other reasons for the authors' change of mind.
Whatever the authors' reasons, I spent much more time on convincing them to rigorously define the van Kampen obstruction and to properly motivate it (to justify its place in the paper), than time sufficient to make corresponding changes  myself.
\end{remark}

\normalsize


{\it Books, surveys and expository papers in this list are marked by the stars.}

\end{document}